\documentclass{amsart}
\usepackage{amsfonts}
\usepackage{amsmath,amssymb}
\usepackage{amsthm}
\usepackage{amscd}
\usepackage{graphics}
\usepackage{graphicx}

\theoremstyle{remark}{
\newtheorem{Def}{{\rm Definition}}

}
\theoremstyle{plain}
{

\newtheorem{Thm}{Theorem}
\newtheorem{MainThm}{Main Theorem}

}

\begin{document}
\title[Regions surrounded by parabolas and trees representing their shapes canonically]{Regions surrounded by parabolas in the plane and trees representing their shapes respecting their natural projection to the line}
\author{Naoki kitazawa}
\keywords{(Non-singular) real algebraic manifolds and real algebraic maps. Smooth maps. Parabolas. Real algebraic curves of degree $1$ or $2$ in the plane. Graphs. Trees. Reeb graphs. Poincar\'e-Reeb graphs. \\
\indent {\it \textup{2020} Mathematics Subject Classification}: 05C05, 05C10, 14P05, 14P10, 14P25, 57R45, 58C05.}

\address{Institute of Mathematics for Industry, Kyushu University, 744 Motooka, Nishi-ku Fukuoka 819-0395, Japan\\
 TEL (Office): +81-92-802-4402 \\
 FAX (Office): +81-92-802-4405 \\
}
\email{naokikitazawa.formath@gmail.com}
\urladdr{https://naokikitazawa.github.io/NaokiKitazawa.html}
\maketitle
\begin{abstract}
The author has been interested in regions surrounded by real algebraic curves of degree $1$ or $2$ in the plane. The author is mainly interested in their shapes and combinatorics. This is a fundamental and natural problem in mathematics being also elementary and connected to various fields. The shapes are understood via graphs the regions collapsing to respecting the canonical projection onto the 1st component.

Our main result is the following: each tree is realized by regions surrounded by parabolas of two types, here.

Related studies are elementary and interesting and surprisingly, this explicit field is started very recently, by Bodin, Popescu-Pampu and Sorea in the 2020s. After that, this is developing, due to the author. The author also investigates this motivated by studies on explicit construction of real algebraic maps onto the regions locally so-called {\it moment maps}: this comes from singularity theory of differentiable maps and real algebraic geometry. 
\end{abstract}
%【REVISE】 combinatoric ～ is → combinatorial object. It is .
%【REVISE】  such that a point is a vertex if and only if the corresponding connected component of the level set contains some singular points → whose vertex set is the set of all points containing some singular points in the corresponding connected component of the level set .
%【REVISE】 We delete "extending the result before".
\section{Introduction.}
\label{sec:1}
Regions in the $k$-dimensional Euclidean space ({\it real affine space}) ${\mathbb{R}}^k$ (with $\mathbb{R}:={\mathbb{R}}^1$) surrounded by real algebraic hypersurfaces there are of fundamental mathematical objects. We are concerned with regions surrounded by real algebraic curves of degree $1$ or $2$. The author is mainly interested in their shapes and combinatorics.

We understand them by graphs the regions collapse to respecting the canonical projection ${\pi}_{2,1}:{\mathbb{R}}^2 \rightarrow \mathbb{R}$, where ${\pi}_{k_1,k_2}:{\mathbb{R}}^{k_1} \rightarrow {\mathbb{R}}^{k_2}$ denotes the projection ${\pi}_{k_1+k_2,k_2}(x)=x_1 \in {\mathbb{R}}^{k_1}$ ($k_1,k_2>0$ and $x:=(x_1,x_2) \in {\mathbb{R}}^{k_1} \times {\mathbb{R}}^{k_2}$). In short, we consider the graph represented as the set of all level sets of the restriction of the projection to the closure of the region taken in ${\mathbb{R}}^2$ and topologized with the quotient topology of the closure canonically.

Our main theorem is as follows. A {{\it tree} is a finite graph with no cycle.
\setcounter{MainThm}{-1}
\begin{MainThm}
\label{mthm:0}
Each tree is realized by regions surrounded by parabolas of two types, here.
\end{MainThm}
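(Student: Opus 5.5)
The plan is to realize an arbitrary finite tree $T$ by one region. I take the two types of parabolas to be the \emph{graph type} $y=ax^2+bx+c$ and the \emph{sideways type} $x=ay^2+by+c$. The region is the connected component of the complement of finitely many such curves, and the graph of $T$ is obtained by collapsing the closure of the region along the level sets of ${\pi}_{2,1}$.

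First I normalize $T$. Choose a leaf $r$ as root and orient every edge away from $r$. The region will realize $T$ with every edge increasing in the $x$-coordinate. Vertices of degree $2$ can be inserted or suppressed freely, since they only mark non-critical level sets. So it suffices to realize, for each internal vertex $v$ with $k\geq 2$ children, a branching of one incoming edge into $k$ outgoing edges, each of which eventually ends in a leaf or branches further.

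\emph{Base piece.} Take the lens bounded by $x=y^2-1$ and $x=1-y^2$. Its graph is a single edge joining the left tip $(-1,0)$, which becomes the root $r$, to the right tip $(1,0)$.

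\emph{Branching tool (tongue).} Inside a region whose level sets near $x=c$ are single intervals, add a narrow sideways parabola $x=c+A(y-y_0)^2$ with $A$ large, opening to the right, with apex $(c,y_0)$ in the interior. Remove its inside, the tongue $\{x>c+A(y-y_0)^2\}$. For $x<c$ nothing changes. For $x$ slightly larger than $c$, the level set splits into two intervals. The tongue widens only like $\sqrt{(x-c)/A}$, and it exits the region through the right-closing boundary (for the base piece, through $x=1-y^2$, which it meets transversally). Hence each of the two pieces closes off at its own rightmost point, which is a leaf. So the apex becomes a vertex of degree $3$ with two new edges. For a vertex with $k$ children, place $k-1$ tongues with apexes on the same vertical line $x=c$ at distinct heights. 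This gives one level set containing $k-1$ critical points, hence one vertex of degree $k+1$.

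\emph{Induction.} Process the vertices of $T$ in order of increasing depth, assigning increasing abscissas $c_v$. When treating $v$, the edge toward $v$ corresponds to a strip-like subregion. This subregion is bounded above and below by arcs of previously placed curves: the base parabolas, or branches of earlier tongues. It closes off on the right where one earlier tongue meets a bounding curve. Insert the tongues for $v$ with apexes at $x=c_v$ inside this subregion, with $A$ much larger than all previous parameters. Choose these parameters so that each new tongue stays within a thin neighbourhood of the horizontal line $y=y_0$ until it leaves the subregion through that subregion's right-closing boundary. Its only intersections are then with the boundary curves of this one subregion.

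I expect the main obstacle to be this last step. Sideways parabolas are global curves, so the branches of a new tongue eventually leave the subregion and run off to $x\to+\infty$. They must not cut other parts of the region, and must not create new bounded components or extra critical points of ${\pi}_{2,1}$ on the chosen component. This includes tangencies or extra intersection points, which could create additional corners or local extrema of the $x$-coordinate. I would first handle it by an explicit order of choices: nest the subregions so that each subregion's vertical range is far smaller than the gaps between other branches, which is possible since earlier tongues have bounded width on the compact $x$-range used. Then a sufficiently large $A$ keeps the new tongue inside a horizontal strip disjoint from everything except its own subregion's boundaries. If extra intersections outside the region are unavoidable, they are harmless, since only the chosen component matters. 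The remaining work is to check transversality at each corner, so that every leaf is a strict local maximum of $x$ on the closure.
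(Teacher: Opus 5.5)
There is a genuine gap in your sentence that degree-$2$ vertices ``can be inserted or suppressed freely''. In this paper they cannot. By Definitions~\ref{def:2} and~\ref{def:3}, the vertices of the Poincar\'e--Reeb graph are exactly the contours of ${\pi}_{2,1}$ on $\overline{D}^{\mathbb{R}^2}$ that pass through singular points of the RA-region. Every point of $\overline{D}^{\mathbb{R}^2}$ lying on two of the curves is singular, and so is every point where a curve has a vertical tangent. So degree-$2$ vertices are forced by the geometry. A tree with degree-$2$ vertices, already the path with three vertices, must be produced with exactly that vertex set; this is why the paper's proof has a separate STEP 1-2.

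Your construction has no device for creating prescribed degree-$2$ vertices, and it also creates unwanted ones:
\begin{itemize}
\item The base lens $\{y^2-1<x<1-y^2\}$ has corners $(0,\pm 1)$ in the middle of its $x$-range. Its Poincar\'e--Reeb graph is therefore a path with three vertices, not one edge. The rotated lens $\{x^2-1<y<1-x^2\}$ would put its corners at the two leaves.
\item More generally, any crossing of two of your curves in $\overline{D}^{\mathbb{R}^2}$ that is not a local extremum of $x$ there gives an extra degree-$2$ vertex. For instance, take a narrow tongue with $y_0>0$ whose two branches both exit through the upper arc of $x=1-y^2$. The lower piece then runs past the lower exit corner on to the tip $(1,0)$, and that corner becomes a degree-$2$ vertex.
\end{itemize}
You would have to ensure that every crossing in $\overline{D}^{\mathbb{R}^2}$ is a leaf, and then add a separate mechanism for the degree-$2$ vertices of $G$.

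A second mismatch concerns ``two types''. In the paper this means two congruence classes: by Main Theorem~\ref{mthm:1}, every curve is congruent to $S_P$ or to one other fixed parabola. It does not mean two orientations, and in fact you only use sideways parabolas. Your induction chooses a larger $A$ at every depth, so the number of congruence classes grows with the tree. This escalation is also exactly what your still-sketchy global-control step relies on. Under the weaker reading (arbitrary sideways parabolas, trees up to subdivision) your plan is plausible, but that step is still unfinished.

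The paper avoids both problems with one observation. Two distinct translates of $x_1=a(x_2-u)^2+v$ meet in at most one point, transversally, because the difference of their equations is linear in $x_2$. In STEP 1-1-2 all curves are translates of $S_1$:
\begin{itemize}
\item $S_1$ bounds the region from outside, and its apex is the root.
\item $S_2,\dots,S_l$ are tongues inside it.
\item Cyclically consecutive curves meet in single points on one vertical line $x_1=p_{1,0}$, and these points are the leaves.
\item The branch vertices are the tongue apices, and the order of their abscissae governs the branching.
\end{itemize}
There are no other crossings inside $\overline{D}^{\mathbb{R}^2}$, hence no spurious vertices and nothing to control globally. The second class, narrow parabolas congruent to $S_{a_0,0}$ with $a_0$ large, appears only in STEP 1-2. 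Each one, placed close to a boundary point, adds one or two degree-$2$ vertices to an edge. The condition on normal segments keeps it away from the rest of $\overline{D}^{\mathbb{R}^2}$.
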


The graph presented above is first presented in \cite{bodinpopescupampusorea} (see also \cite{sorea1, sorea2}). The graph is named the {\it Poincar\'e-Reeb graph} and it is essentially same as the {\it Reeb graph} of a smooth function on a manifold with no boundary. Reeb graphs of smooth functions are classical and fundamental tools and have been strong tools in understanding the manifolds via nice smooth functions such as Morse functions (\cite{reeb}). 
Bodin, Popescu-Pampu, and Sorea consider a finite graph embedded in the plane generic way and prove the existence of a region surrounded by mutually disjoint ({\it non-singular}) real algebraic curves. Approximation respecting derivatives is a main ingredient in the argument. The author was fascinated with this study and have been studying explicit realization of regions surrounded by real algebraic hypersurfaces of certain explicit classes and yielding the natural graphs as above satisfying suitable explicit conditions. This study also comes from singularity theory of differentiable maps, where the author has been interested in constructing explicit nice smooth or real algebraic (analytic) functions and maps. Especially, the author has been interested in constructing explicit functions and maps onto the closures of the regions of certain classes generalizing the canonical projection of the $m$-dimensional unit sphere $S^{m}:=\{x=(x_1,\cdots, x_{m+1}) \in {\mathbb{R}}^{m+1} \mid {\Sigma}_{j=1}^{m+1} {x_j}^2=1\}$ onto the $n$-dimensional unit disk $D^{n}:=\{x=(x_1,\cdots x_n) \in {\mathbb{R}}^n \mid {\Sigma}_{j=1}^{n} {x_j}^2 \leq 1\}$ (the restriction of ${\pi}_{m+1,n}$ to $S^m$). Note that the maps onto the closure of the regions are locally so-called {\it moment} maps. For moment maps, see \cite{buchstaberpanov, delzant} for example. Real algebraic functions here are obtained by composing ${\pi}_{2,1}$. For this, \cite{kitazawa3} is a pioneering study, remarked in \cite{kitazawa9}. Preprints \cite{kitazawa5, kitazawa6, kitazawa7, kitazawa8, kitazawa13, kitazawa14, kitazawa15, kitazawa18, kitazawa20} are closely related to this. Studies of regions are presented by the author, motivated by previous studies such as \cite{kitazawa3, kitazawa5, kitazawa7, kitazawa8}, and a related preprint has been presented in  \cite{kitazawa10} first, by the author. \cite{kitazawa11, kitazawa12, kitazawa16, kitazawa17, kitazawa19} follow this well, for example. Related to this, realization of finite graphs as Reeb graphs of nice differentiable (smooth) functions was first studied in \cite{sharko}. There, a smooth function $c:X \rightarrow \mathbb{R}$ on a (suitable) closed surface $X$ at each critical point $p$ of which the form is $c(z)={\Re}z^{l} +c(p)$ or $c(z)=z\bar{z}+c(p)$, is reconstructed from a given finite graph satisfying certain additional conditions: here, $z$ is a complex number, $\bar{z}$ is its conjugate, $\Re z$ is the real part of a complex number $z$, $l>1$ is a positive integer, and $p$ is identified with $z=0$ for a suitable local coordinate. This is improved in \cite{masumotosaeki} for arbitrary finite graphs, later, reconstruction of Morse functions is considered systematically first in \cite{michalak}, and the author has contributed such reconstruction of smooth functions with level sets having prescribed shapes, in \cite{kitazawa1, kitazawa2, kitazawa4}.
 
This paper is organized as follows. In the second section, we explain fundamental notions and notation.
In the third section, we formulate our regions, reviewing the preprints \cite{kitazawa17, kitazawa18, kitazawa19} of the author, for example, in a self-contained way. In the fourth section, we restate Main Theorem \ref{mthm:0} in a revised way, as Main Theorem \ref{mthm:1}. We also present Main Theorem \ref{mthm:2}, as another result. This is related to an explicit Reeb graph presented in  \cite[Theorem 1]{kitazawa20}. 
\section{Preliminaries.}
We review or introduce some fundamental notions and notation.
Let $\mathbb{Z}$ denote the set of all integers.

For a map $c:X \rightarrow Y$, the restriction of $c$ to a subset $Z \subset X$ is denoted by $c {\mid}_Z$.
 
For a topological space $X$ and its subspace $Y \subset X$, we use $\overline{Y}^{X}$ for the closure of $Y$ in $X$.

A map between topological spaces $c:X \rightarrow Y$ is said to be {\it proper} if the preimage $c^{-1}(K)$ is compact for any compact set $K \subset Y$. 

The {\it Reeb space} $R_c$ of a (continuous) map $c:X \rightarrow Y$ between topological spaces means the quotient space $R_c:=X/{\sim c}$ defined in the following way. We can easily defined the equivalence relation ${\sim}_c$ on $X$ by the rule that $p_1 {\sim}_c p_2$ holds if and only if $p_1$ and $p_2$ are in a same connected component of a same preimage $c^{-1}(q)$. In the case $Y$ is an ordered set such as $\mathbb{R}$, which is seen as one of fundamental totally ordered set, each preimage $c^{-1}(q)$ is called a {\it level set} of $c$ and each connected component of it is a {\it contour} of $c$. We can define the quotient map $q_c:X \rightarrow R_c$ with the uniquely defined continuous map $\bar{c}$ yielding the relation $c=\bar{c} \circ q_c$.  

For a differential manifold $X$, let $T_p X$ denote the tangent vector space at $p \in X$.
A {\it singular} point $p$ of a differential map $c:X \rightarrow Y$ between differentiable manifolds is a point, where the differential ${dc}_p:T_p X \rightarrow T_{c(p)} Y$ is not of the full rank. 
For a singular point $p$ of $c$, $c(p)$ is a {\it singular value} of $c$.
The map ${dc}_p$ is a linear map between the real vector spaces.

In the case $Y:=\mathbb{R}$, we also use "{\it critical}" instead of {\it singular}. A contour of a differentiable function $c:X \rightarrow \mathbb{R}$ is {\it critical} ({\it regular}) of it contains some critical points (resp. no critical point).

Let $0 \in {\mathbb{R}}^k$ denote the origin. A {\it real algebraic} manifold is a union $Z_{c}$ of some connected components of the zero set $c^{-1}(0)$ of a real polynomial map $c:{\mathbb{R}}^{k_1} \rightarrow {\mathbb{R}}^{k_2}$ such that $c {\mid}_{Z_{c}}$ has no singular point. In other words, this is assumed to be {\it non-singular}.

A {\it graph} means a $1$-dimensional connected CW complex the closure of whose $1$-cell ({\it edge}) is homeomorphic to $D^1$. A {\it vertex} of graph means a 0-cell of it.
The {\it degree} of a vertex of a graph means the number of all edges of the graph incident to it.
A {\it tree} is a finite graph whose 1st Betti number is $0$, or equivalently, which has no {\it cycle}. 

The Reeb space of a smooth function $c:X \rightarrow \mathbb{R}$ on a closed and connected manifold the number of all of whose critical values is finite is a graph by the following rule. A point $v$ there is a vertex of it if and only if ${q_c}^{-1}(v)$ is a critical contour of $c$. This is the {\it Reeb graph} of $c$.
\section{Reviewing RA-regions and their Poincar\'e-Reeb graphs.}  
\begin{Def}
\label{def:1}
Let $D \subset {\mathbb{R}}^n$ be a non-empty, connected and open set of ${\mathbb{R}}^n$. Let $\{S_{\lambda}\}_{\lambda \in \Lambda}$ be a family of real algebraic manifolds of dimension $n-1$ in ${\mathbb{R}}^n$ satisfying the following.
\begin{itemize}
\item Each $S_{\lambda}$ of which is a union of some components of the zero set of a real polynomial $f_{\lambda}$ and $f_{\lambda}$ is used for giving $S_{\lambda}$ the structure of the real algebraic manifold.
\item The intersection $S_{\lambda}  \bigcap {\overline{D}}^{{\mathbb{R}}^n}$ is non-empty for each $\lambda \in \Lambda$.
\end{itemize}
If the following hold in addition, then the pair $(D,\{S_{\lambda}\}_{\lambda \in \Lambda})$ is said to be a {\it real algebraic region} or an {\it RA-region} of ${\mathbb{R}}^n$.  
\begin{enumerate}
\item \label{def:2.1} There exists a connected open neighborhood $U_D$ of the closure ${\overline{D}}^{{\mathbb{R}}^k}$ such that $D=U_D \bigcap {\bigcap}_{\lambda \in \Lambda} \{x \mid f_{\lambda}(x)> 0\}$ and $U_D \bigcap S_{\lambda}=U_D \bigcap \{f_{\lambda}(x)=0\}$ hold.
\item \label{def:2.2} Each point of $p \in {\overline{D}}^{{\mathbb{R}}^k}$ is contained in at most $n$ distinct manifolds from $\{S_{\lambda}\}_{\lambda \in \Lambda}$. 
Let $p$ be an  arbitrary point from $({\bigcup}_{\lambda \in \Lambda} S_{\lambda}) \bigcap {\overline{D}}^{{\mathbb{R}}^k}$.
For $p \in {\bigcap}_{j=1}^{n_p} S_{{\lambda}_j} \bigcap {\overline{D}}^{{\mathbb{R}}^n}$, contained in these exactly $1 \leq n_p \leq n$ distinct manifolds from $\{S_{\lambda}\}_{\lambda \in \Lambda}$, it holds that $\dim {\bigcap}_{j=1}^{n_p} T_p S_{{\lambda}_j}=n-n_p$.
\end{enumerate}

\end{Def}

\begin{Def}
\label{def:2}
A point $p \in {\overline{D}}^{{\mathbb{R}}^n}$ is called a {\it singular} point of an RA-region $(D,\{S_{\lambda}\}_{\lambda \in \Lambda})$ if and only if either of the following two holds.
\begin{itemize}
\item \begin{itemize}
\item The point $p$ is in exactly $n_p<n$ distinct manifolds $S_{{\lambda}_j}$ ($1 \leq j \leq n_p$) here. Let $S_{{\lambda}_j,1 \leq j \leq n_p}$ denote the set of all points of ${\overline{D}}^{{\mathbb{R}}^n}$ contained in these $n_p$ manifolds $S_{{\lambda}_j}$ and not contained in any other hypersurface $S_{\lambda}$. This space is an ($n-n_p$)-dimensional manifold with no boundary. 
\item 
The point $p$ is also a critical point of the restriction ${\pi}_{n,1} {\mid}_{S_{{\lambda}_j,1 \leq j \leq n_p}}$.
\end{itemize}
\item The point $p$ is in exactly $n$ distinct manifolds $S_{{\lambda}_j}$  ($1 \leq j \leq n$).
\end{itemize}

\end{Def}

Now consider the situation that the quotient map $q_{{\pi}_{n,1} {\mid}_{{\overline{D}}^{{\mathbb{R}}^n}}}$ is proper and that the cardinality of $\Lambda$ is finite.
Due to the algebraic conditions and the finiteness, yielding the finiteness of the number of values of the function at singular points of the RA-region, and fundamental and important arguments in \cite{saeki1, saeki2}, the Reeb space has the structure of a graph by defining its vertex $v$ as a point whose preimage ${q_{{\pi}_{n,1}}}^{-1}(v)$ contains a singular point of the RA-region. For this, consult also articles on stratified Morse theory, presented in \cite{hamm, massey}, for example. 
\begin{Def}
\label{def:3}
This is the {\it Poincar\'e-Reeb graph} of the RA-region and denoted by ${\rm PR}_{(D,\{S_{\lambda}\}_{\lambda \in \Lambda})}$.
\end{Def}
We can argue in this way for some cases where such finiteness is dropped. For this, enjoy Main Theorem \ref{mthm:2}.
\section{Main Theorem \ref{mthm:0} revisited as Main Theorem \ref{mthm:1}, and Main Theorem \ref{mthm:2}.}
A {\it circle} $\{(x_1,x_2) \in {\mathbb{R}}^2 \mid {(x_1-p_1)}^2+{(x_2-p_2)}^2=r_{1,2}\}$ centered at $(p_1,p_2) \in {\mathbb{R}}^2$ and of radius $r_{1,2}>0$ is an important $1$-dimensional real algebraic manifold (real algebraic curve). In the present paper,  a {\it parabola} $\{(x_1,x_2) \in {\mathbb{R}}^2 \mid x_2=a{(x_1-p_1)}^2+p_2\}$ with $(p_1,p_2) \in {\mathbb{R}}^2$ and $a \neq 0$ is also an important real algebraic curve.
A {\it hyperbola} $\{(x_1,x_2) \in {\mathbb{R}}^2 \mid r=a{(x_1-p_1)}^2-b{(x_1-p_1)}^2\}$ with $(p_1,p_2) \in {\mathbb{R}}^2$ and $a,b,r>0$ is also an important real algebraic curve. This consists of exactly two connected components.

We also expect readers to have elementary knowledge on so-called Euclidean plane geometry.
Two subsets in a real affine space are {\it congruent} if one subset is changed to the remaining subset by a composition of finitely many rotations, parallel transformations, and reflections, where we assume elementary terminologies, notion and arguments on Euclidean geometry. We can also say a subset is {\it congruent} to the other.

A set in ${\mathbb{R}}^2$ congruent to such a real algebraic curve of degree $2$ is also called a {\it circle}, a {\it parabola}, or a {\it hyperbola}, for example.

\begin{MainThm}
\label{mthm:1}
Let $S_P$ be a parabola.
Each tree $G$ is isomorphic to the Poincar\'e-Reeb graph of some RA region surrounded by parabolas of congruent to either $S_P$ or another suitably chosen parabola. 
\end{MainThm}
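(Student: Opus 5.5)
Our plan is to prove Main Theorem \ref{mthm:1} by induction on the number of edges of $G$, building the region explicitly from parabolas congruent to $S_P: x_2=a x_1^2$ with $a>0$, together with one further parabola type $S_Q$ whose axis is parallel to the $x_1$-axis, i.e. $x_1=b x_2^2+c$. We fix a convenient normal form for the regions to be constructed. The region $D$ is the set of points above finitely many upward-opening parabolas congruent to $S_P$ (so $f_\lambda=x_2-a(x_1-p_{1,\lambda})^2-p_{2,\lambda}$) and on the correct side of finitely many sideways parabolas congruent to $S_Q$, intersected with a suitable bounded piece.

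A sideways parabola $x_1=b(x_2-q)^2+c$ is vertical at its vertex. Its vertex is therefore a critical point of ${\pi}_{2,1}$ restricted to the curve, and by Definition \ref{def:2} it produces a vertex of degree $1$ or $3$ in the Poincar\'e-Reeb graph, according to which side of it $D$ lies on.

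The key steps, in order, are as follows. First, we reduce to trees all of whose vertices have degree $1$ or $3$ (plus the degree-$2$ vertices coming from transversal crossings of two boundary curves). To do this, we would argue that a vertex of higher degree can be realized by splitting it, or by allowing several corner points in the same level set. Since in Definition \ref{def:3} a vertex is a contour containing singular points, several singular points on one contour merge into a single vertex of higher degree. We will exploit this by placing vertices of the sideways parabolas on a common vertical line $x_1=\mathrm{const}$.

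Second, for the base case (a single edge), we take the bounded region enclosed between the sideways parabola $x_1=x_2^2$ and the upward parabola $x_2=x_1^2$ after a vertical shift. This is a lens whose Poincar\'e-Reeb graph is one edge. Alternatively, we use the region between two sideways parabolas opening in opposite directions.

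Third, for the inductive step we attach new branches. Given a region $D$ realizing $G$ with a prescribed leaf or edge, we insert a small sideways parabola $x_1=-b(x_2-q)^2+c$ with large $b$, opening to the left, poking into $D$ from the right. Inside $D$ this creates a splitting of a contour into two (a degree-$3$ vertex), and each new prong is then capped by existing or new curves. Here the congruence restriction matters: all the sideways parabolas should be congruent to a single $S_Q$. We would use one fixed large $b$ and translates, reflected by $x_1 \mapsto -x_1$ where needed, since reflections are allowed under congruence.

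The main obstacle will be controlling global intersections. A parabola is unbounded, so a newly inserted curve meets the rest of $\overline{D}^{{\mathbb{R}}^2}$ far from where it is intended to act. This threatens condition (\ref{def:2.2}) of Definition \ref{def:1} and can create spurious vertices. We plan to handle this by using nested scales. Each new sideways parabola is very narrow (large fixed $b$ after a homothety-free arrangement), and its translation is chosen so that, outside a thin horizontal strip around $x_2=q$, it lies outside the bounded part of $D$ cut out by the upward parabolas. Transversality at the finitely many crossing points then follows from a generic small translation, since translations act transitively and non-transversal positions form a lower-dimensional set. Finally, we will check the properness of $q_{{\pi}_{2,1}\mid \overline{D}}$, which follows from boundedness of $D$.
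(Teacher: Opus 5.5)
\textbf{There is a genuine gap at the step you identify as the main obstacle, and the fix you propose does not work.} Making the inserted parabola $x_1=-b(x_2-q)^2+c$ very narrow only confines it \emph{vertically}.

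For $|x_2-q|\geq\epsilon$ it does lie to the left of $\overline{D}^{\mathbb{R}^2}$ once $b\epsilon^2$ is large. Inside the strip $|x_2-q|<\epsilon$, however, its branches $x_2=q\pm\sqrt{(c-x_1)/b}$ run along the whole ray $\{(x_1,q)\mid x_1<c\}$. This ray is its own axis, and it starts at a point of $D$. So the insertion is not a local poke: it cuts a horizontal slit through $\overline{D}^{\mathbb{R}^2}$ from $(c,q)$ leftwards. This has two consequences:
\begin{itemize}
\item every crossing of that ray with $\partial D$ produces new singular points in the sense of Definition \ref{def:2}, namely transverse corners, each giving a leaf or a degree-$2$ vertex;
\item if the ray re-enters $\overline{D}^{\mathbb{R}^2}$, the removed sliver becomes a cross-cut, which can disconnect the region.
\end{itemize}
Increasing $b$ only thins the slit. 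What is missing is a visibility condition on where the axis of an inserted parabola goes. The paper builds this into STEP 1-1-2 and uses it in STEP 1-2: at each boundary point $p_j$ lying on exactly one curve, the segment along the outward normal meets $\overline{D}^{\mathbb{R}^2}$ only at $p_j$. A sufficiently narrow parabola congruent to $S_{a_0,0}$, placed near $p_j$ with its axis along that normal, then meets the compact set $\overline{D}^{\mathbb{R}^2}$ only near $p_j$.

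\textbf{Even granting localization, you never compute what an insertion does to the Poincar\'e--Reeb graph.} The phrase ``each new prong is then capped by existing or new curves'' leaves open where the prongs end. Every transverse crossing of the new curve with $\partial D$ is itself a vertex, so insertions generically create degree-$2$ vertices you did not ask for. Since $G$ prescribes exactly how many degree-$2$ vertices lie on each edge, you need an operation that adds one or two of them to a chosen edge and nothing else. The paper's three kinds of notches in STEP 1-2 do this: near a corner on $x_1=p_{1,0}$, near the vertex of some $S_j$, or near a generic boundary point. Your proposal has no such operation.

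Your induction on edges also needs to attach a leaf at an arbitrary vertex of the smaller tree (a leaf, a degree-$2$ vertex, or a high-degree vertex). The operation you describe only creates a new degree-$3$ vertex inside an edge.

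The paper avoids incremental global bookkeeping for trees without degree-$2$ vertices. It places $l$ translates of one sideways parabola at once, cyclically arranged so that consecutive ones meet on a single vertical line $x_1=p_{1,0}$. Then the vertex of $S_1$ is the root leaf, the vertices of $S_2,\dots,S_l$ are the branch points, and the corners on $x_1=p_{1,0}$ are the remaining leaves. Higher degree comes from putting several parabola vertices on one contour, which is the coincidence idea you also mention.

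\textbf{Two smaller points.}
\begin{itemize}
\item ``Splitting'' a vertex of degree at least $4$ changes the isomorphism type, so only your coincidence alternative is valid.
\item Your alternative base case, two oppositely opening sideways parabolas, has two corners on one vertical segment. It therefore gives a path with a degree-$2$ vertex, not a single edge. The unshifted lens between $x_1=x_2^2$ and $x_2=x_1^2$ is fine.
\end{itemize}
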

\begin{proof}
We first consider a tree the degrees of whose vertices of which is not $2$ (STEP 1-1). After that, we consider a general tree (STEP 1-2). \\
\ \\
STEP 1-1 The case of a tree the degrees of whose vertices of which are not $2$. \\
STEP 1-1-1 The case where the tree is homeomorphic to $D^1$. \\
We choose a parabola $\{(x_1,x_2) \in {\mathbb{R}}^2 \mid x_2=a{(x_1-p_1)}^2+p_2\}$ with $(p_1,p_2) \in {\mathbb{R}}^2$ and $a>0$, and another parabola $\{(x_1,x_2) \in {\mathbb{R}}^2 \mid x_2=-a{(x_1-p_1)}^2+p_2\}$, congruent to the previous parabola. By considering the projection ${\pi}_{2,1}$, we can see that this is a desired case. \\
\ \\
STEP 1-1-2 The case where the tree $G$ is not homeomorphic to $D^1$. \\
$S_{1}:=\{(x_1,x_2) \in {\mathbb{R}}^2 \mid x_1=a{(x_2-p_1)}^2+p_2\}$ with $(p_1,p_2) \in {\mathbb{R}}^2$ and $a>0$.
 We can choose $l-1 \geq 1$ parabolas $S_j$ congruent to $S_1$ and parallel to this in the following way.
\begin{itemize}
\item For each $S_j$, a positive integer $2 \leq j \leq l$ is assigned. $S_{j}:=\{(x_1,x_2) \in {\mathbb{R}}^2 \mid x_1=a{(x_2-p_{1,j})}^2+p_{2,j}\}$ and $p_{1,1}:=p_1<p_{1,j}$ for each $2 \leq j \leq l$.

\item Two distinct parabolas $S_{j_1}$ and $S_{j_2}$ in $\{S_j\}_{j=1}^{l}$ intersect if and only if $j_1= j_2 \pm 1$ or $(j_1,j_2)=(1,l),(l,1)$. Furthermore, these intersections are always of the form $\{(p_{1,0},q)\}$ with a fixed real number $p_{1,0}$ satisfying $p_{1,0}>p_{1,j}$ ($1 \leq j \leq l$) and one-point sets. 
\item We have an RA-region $(D,\{S_j\}_{j=1}^l)$ such that the closure ${\overline{D}}^{{\mathbb{R}}^2}$ is compact and connected whose Poincar\'e-Reeb graph is isomorphic to $G$.
\item At each point $p_j$ in ${\overline{D}}^{{\mathbb{R}}^2} \bigcap S_j$ contained in exactly one $S_j$, a straight segment parallel to a normal vector at $p_j$ departing from ${\overline{D}}^{{\mathbb{R}}^2}$ does not contain any point of ${\overline{D}}^{{\mathbb{R}}^2}$ other than $p_j$.
\end{itemize}
See Figure \ref{fig:1} for this situation.

\begin{figure}
	\includegraphics[width=70mm,height=65mm]{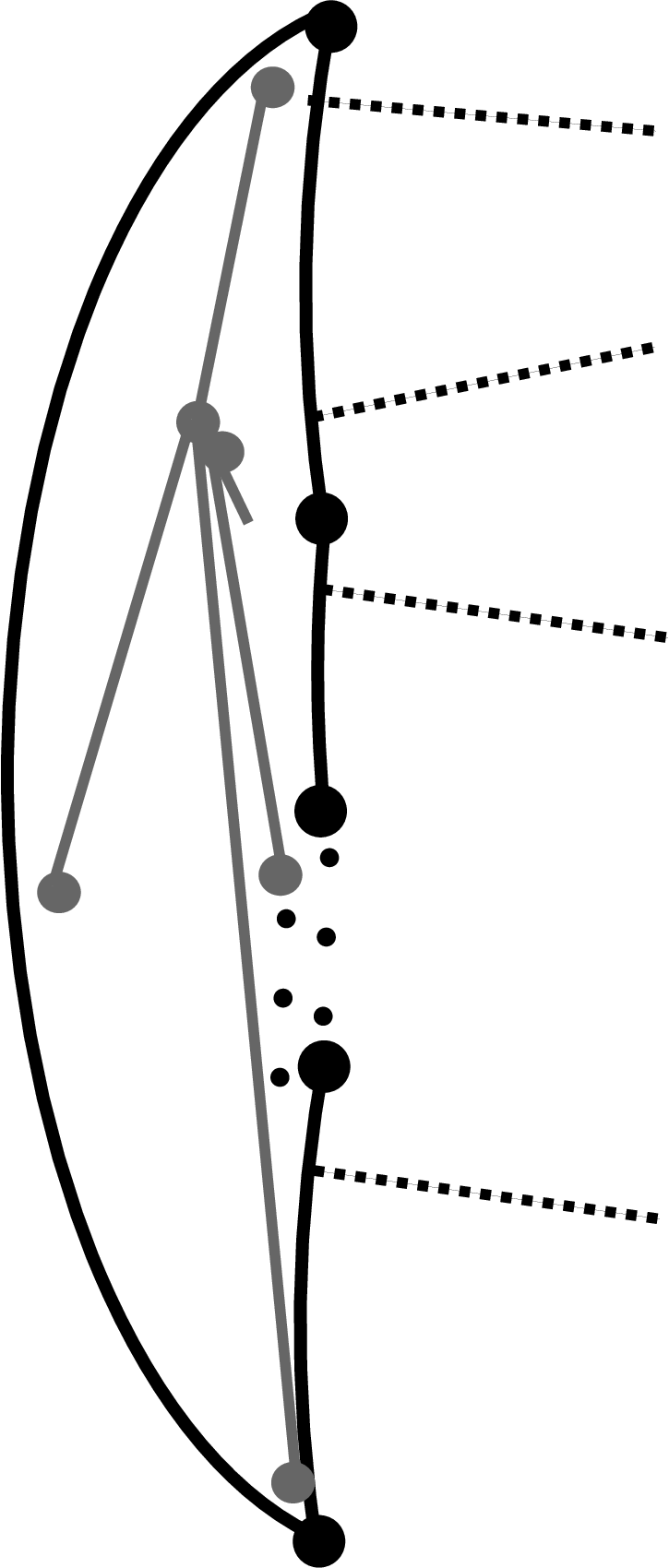}
	\caption{$(D,\{S_j\}_{j=1}^l)$ for  STEP 1-1-2. Its Poincar\'e-Reeb graph is colored in grey. Dotted straight segments show segments parallel to the normal vectors at the points. Hereafter, each "$\cdots$" in Figures is for abbreviation of certain objects.}
	\label{fig:1}
\end{figure}

For our reviewing and understanding, we add exposition on the graph structure, in STEP 1-1-2. Any connected component $I_{b,q}$ of the intersection of a line of the form $\{(b,t) \mid t \in \mathbb{R}\}$ and ${\overline{D}}^{{\mathbb{R}}^n}$ is homeomorphic to the one-point set or $D^1$.
The Poincar\'e-Reeb graph ${\rm PR}_{(D,\{S_j\}_{j=1}^{l})}$ can be defined in the following way. 

\begin{itemize}
\item Its vertex is defined as the uniquely defined value of  $q_{{\pi}_{n,1} {\mid}_{{\overline{D}}^{{\mathbb{R}}^n}}}$ on a connected component $I_{b,s}$ of the intersection of a line of the form $\{(b,t) \mid t \in \mathbb{R}\}$ and ${\overline{D}}^{{\mathbb{R}}^n}$ with either of the following.
\begin{itemize}
\item $b=p_{1,1}=p_1$.
\item $b=p_{1,0}$
\item $b=p_{1,j}$ and $I_{b,q}$ and some $S_i$ intersect in some point there in such a way that there their tangent vector spaces agree.
\end{itemize}
\item Its edge is defined as the image of a connected component of the complementary set of the union of all $I_{b,s}$ above in ${\overline{D}}^{{\mathbb{R}}^n}$ by $q_{{\pi}_{n,1} {\mid}_{{\overline{D}}^{{\mathbb{R}}^n}}}$.
\end{itemize}
\ \\
 \\
\noindent STEP 1-2 The case of a tree the degrees of whose vertices of which may be $2$. \\

We first obtain a case of STEP 1-1 and the graph homeomorphic to a given tree $G$. For the number of the parabolas here, we use $l:=l_0$, and we also use $D:=D_0$ for the region $D$. 
We consider another parabola of the form $S_{a_0,0}:=\{(x_1,x_2) \in {\mathbb{R}}^2 \mid x_1=a_0{(x_2-p_{a_0,1})}^2+p_{a_0,2}\}$ with $(p_{a_0,1},p_{a_0,2}) \in {\mathbb{R}}^2$ and a sufficiently large number $a_0>0$.

In the case of STEP 1-1, at each point $p_j$ in ${\overline{D_0}}^{{\mathbb{R}}^2} \bigcap S_j$ contained in exactly one $S_j$, a straight segment parallel to a normal vector at $p_j$ departing from ${\overline{D_0}}^{{\mathbb{R}}^2}$ does not contain any point of ${\overline{D_0}}^{{\mathbb{R}}^2}$ other than $p_j$. Due to this, we can add a parabola congruent to $S_{a_0,0}$ to add one or two vertices to each edge of the tree in STEP 1-1. To each edge of the tree, we can add vertices of an arbitrary positive number by choosing finitely many points of ${\overline{D_0}}^{{\mathbb{R}}^2}-D_0$ suitably and adding parabolas which are sufficiently close to the points and which are congruent to $S_{a,0,0}$ and mutually disjoint in ${\overline{D_0}}^{{\mathbb{R}}^2}$. More precisely, we do finitely many operations of the following.

\begin{itemize}
\item By choosing a point of the form $(p_{1,0},q)$ and put a parabola congruent to $S_{a_0,0}$ and sufficiently close to the point suitably, we can put one new vertex in the edge of the Poincar\'e-Reeb graph ${\rm PR}_{(D_0,\{S_j\}_{j=1}^{l_0})}$ incident to $q_{{\pi}_{n,1} {\mid}_{{\overline{D_0}}^{{\mathbb{R}}^n}}}(p_{1,0},q)$. See also Figure \ref{fig:2}.
\begin{figure}
	\includegraphics[width=70mm,height=65mm]{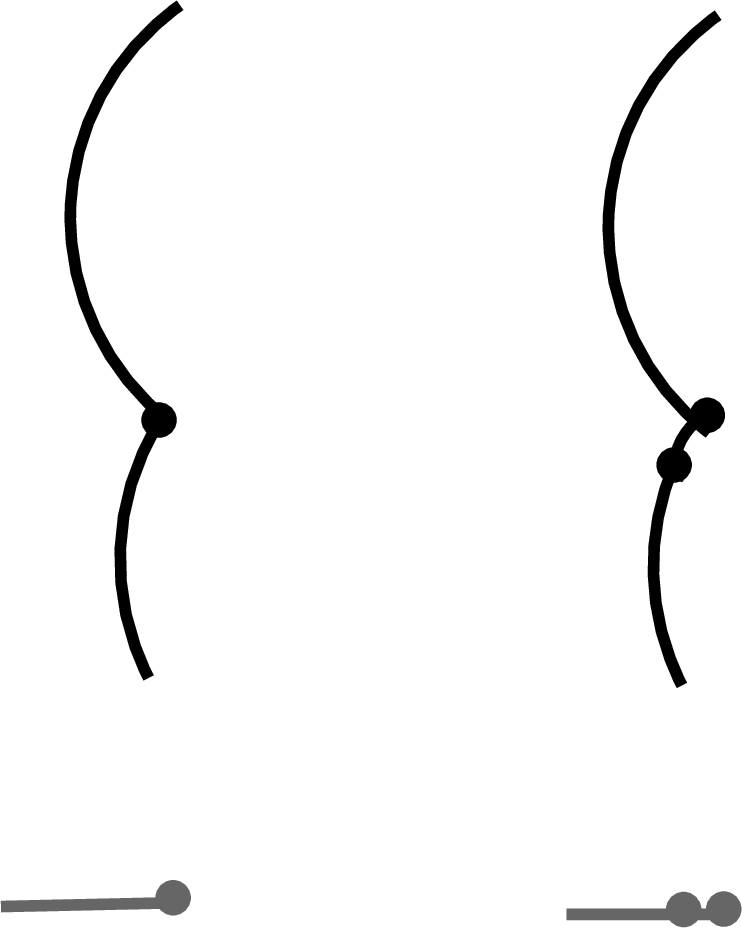}
	\caption{A point of the form $(p_{1,0},q)$ is chosen and a parabola congruent to $S_{a_0,0}$ and sufficiently close to the point is chosen suitably. The Poincar\'e-Reeb graph ${\rm PR}_{(D_0,\{S_j\}_{j=1}^{l_0})}$ is locally changed.}
	\label{fig:2}
\end{figure}
\item By choosing a point of the form $(p_{1,j},q_p) \in {\overline{D_0}}^{{\mathbb{R}}^2}-D_0$ ($2 \leq j \leq l$) with $q_p$ being an arbitrary point satisfying the condition $(p_{1,j},q_p) \in {\overline{D_0}}^{{\mathbb{R}}^2}-D_0$ and putting a parabola congruent to $S_{a_0,0}$ and sufficiently close to the point suitably, we can put one new vertex in the unique edge $e_{p_{1,j},p_{2,j}}$ of the Poincar\'e-Reeb graph ${\rm PR}_{(D_0,\{S_j\}_{j=1}^{l_0})}$, which is incident to $q_{{\pi}_{n,1} {\mid}_{{\overline{D_0}}^{{\mathbb{R}}^n}}}(p_{1,j},q)$, and the values of $\bar{{\pi}_{n,1} {\mid}_{{\overline{D_0}}^{{\mathbb{R}}^n}}}$, satisfying the relation ${\pi}_{n,1} {\mid}_{{\overline{D_0}}^{{\mathbb{R}}^n}}=\bar{{\pi}_{n,1} {\mid}_{{\overline{D_0}}^{{\mathbb{R}}^n}}} \circ q_{{\pi}_{n,1} {\mid}_{{\overline{D_0}}^{{\mathbb{R}}^n}}}$, on which, are smaller than $p_{1,j}$. See also Figure \ref{fig:3}.
\begin{figure}
	\includegraphics[width=70mm,height=65mm]{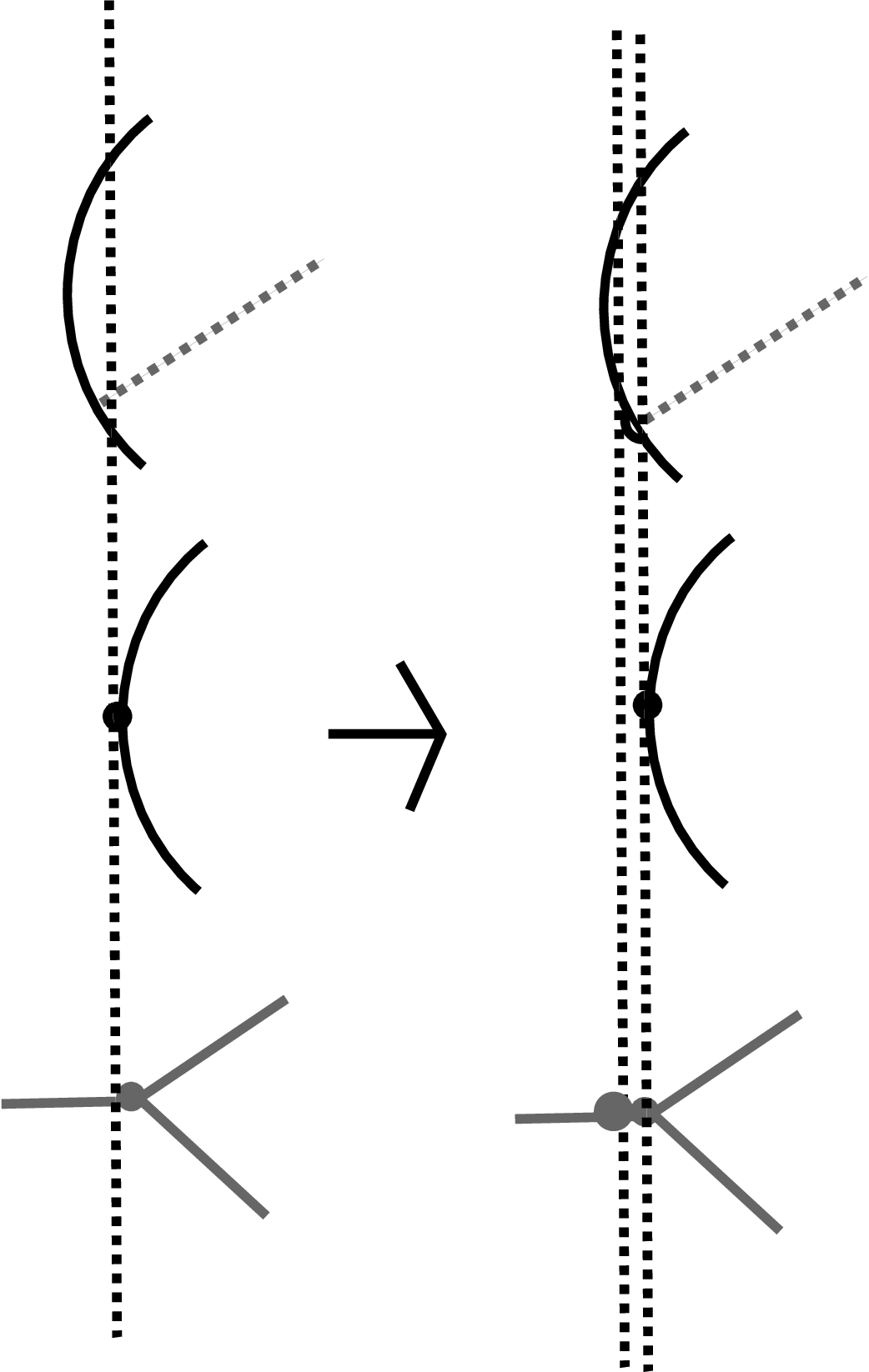}
	\caption{A point of the form $(p_{1,j},q_p) \in {\overline{D_0}}^{{\mathbb{R}}^2}-D_0$ ($2 \leq j \leq l$) is chosen and a parabola congruent to $S_{a_0,0}$ and sufficiently close to the point is chosen suitably. The Poincar\'e-Reeb graph ${\rm PR}_{(D_0,\{S_j\}_{j=1}^{l_0})}$ is locally changed.}
	\label{fig:3}
\end{figure}
\item By choosing a suitable point in ${\overline{D_0}}^{{\mathbb{R}}^2}-D_0$ except the previous points and $(p_{1,1},p_{2,1})$, and put a parabola congruent to $S_{a_0,0}$ and sufficiently close to the point suitably, we can put two new vertices in each edge of the Poincar\'e-Reeb graph ${\rm PR}_{(D_0,\{S_j\}_{j=1}^{l_0})}$. See also Figure \ref{fig:4}.
\begin{figure}
	\includegraphics[width=70mm,height=65mm]{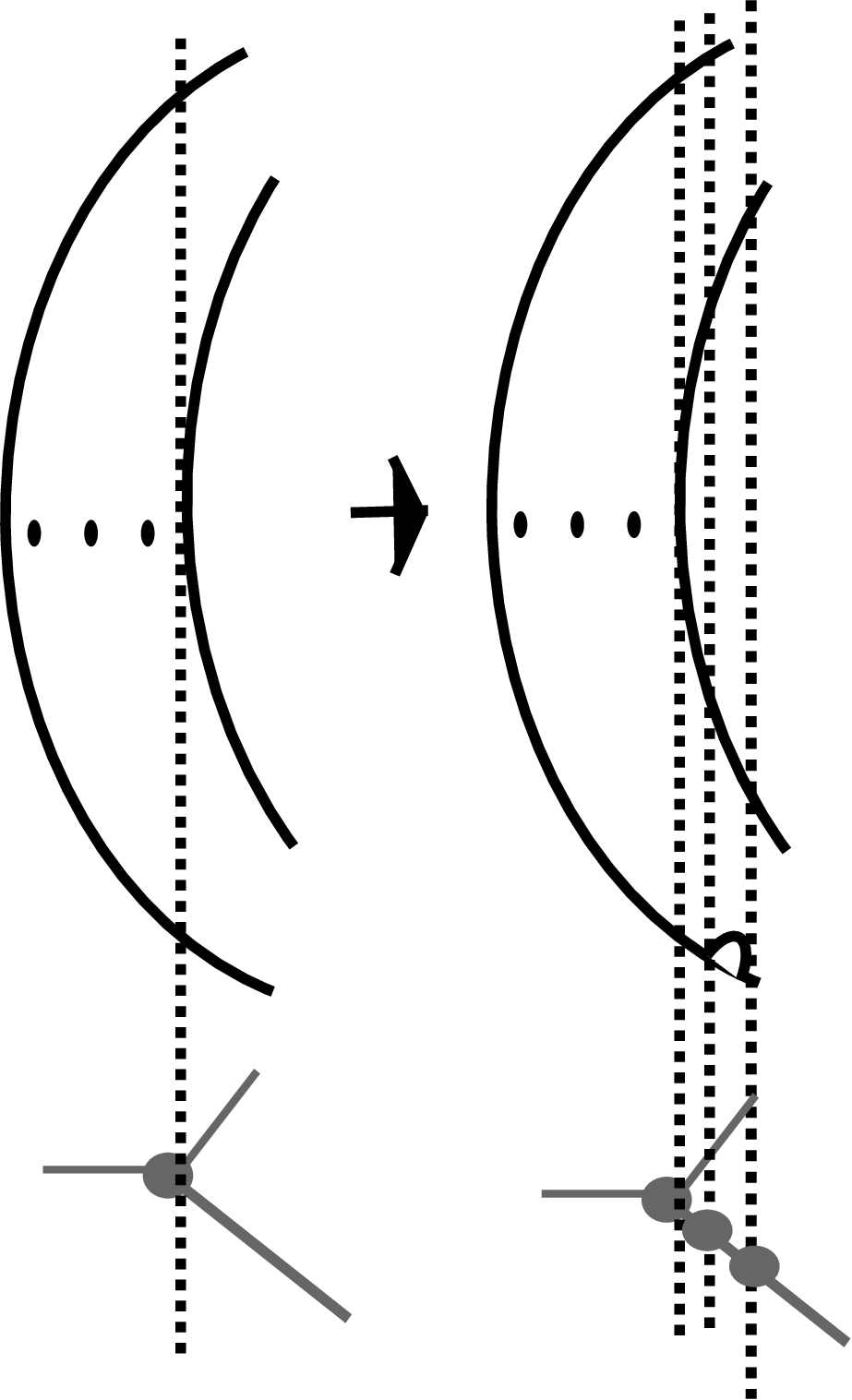}
	\caption{A point in $(S_1-(S_2 \bigcup S_l)) \bigcap {\overline{D}}^{{\mathbb{R}}^2}$ which is not of the form $(p_{1,j},q_p) \in {\overline{D}}^{{\mathbb{R}}^2}-D$ ($2 \leq j \leq l$) is chosen and a parabola congruent to $S_{a_0,0}$ and sufficiently close to the point is chosen suitably. The Poincar\'e-Reeb graph ${\rm PR}_{(D,\{S_j\}_{j=1}^{l_0})}$ is locally changed.}
	\label{fig:4}
\end{figure}
\end{itemize}

We have shown that we have arbitrary trees (up to isomorphisms) in this way.

This completes the proof.
\end{proof}
Note that by fundamental arguments and properties on real algebraic curves of degree $1$ or $2$, we can argue this in the case of circles.
\begin{Thm}
\label{thm:1}
For circles instead of parabolas, the same fact holds.
\end{Thm}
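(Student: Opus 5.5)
We follow the scheme of the proof of Main Theorem \ref{mthm:1}, replacing each parabola by a circle. The statement to prove is: each tree $G$ is isomorphic to ${\rm PR}_{(D,\{S_j\}_j)}$ for some RA-region whose boundary curves are circles, of congruent to either a fixed circle or to one other suitably chosen circle. The argument again splits into the case where no vertex of $G$ has degree $2$, and the general case obtained by inserting degree-$2$ vertices along edges.

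\emph{Base cases.} If $G$ is homeomorphic to $D^1$, a single circle suffices: the open disk $D$ bounded by it, projected by ${\pi}_{2,1}$, has two singular points (the leftmost and rightmost points), and its Poincar\'e-Reeb graph is one edge. If no vertex has degree $2$ and $G$ is not homeomorphic to $D^1$, we copy STEP 1-1-2 with the following configuration.
\begin{itemize}
\item Take one large circle $S_1$ (the analogue of the outer parabola) and $l-1$ circles $S_2,\dots,S_l$ of one common smaller radius.
\item Place the small circles with centers slightly to the right of points of the right arc of $S_1$, so each meets $S_1$ transversally in two points.
\item Let $D$ be the interior of $S_1$ minus the closed small disks, intersected with the half-plane condition coming from the chain of consecutive intersections.
\end{itemize}
Concretely, the leftmost points of the small circles, which lie inside the disk of $S_1$, play the role of the tangency points at $b=p_{1,j}$. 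Each such point is a critical point of ${\pi}_{2,1}$ restricted to $S_j$, and it splits one contour into two, producing a trivalent vertex. Varying the $x_1$-coordinates of these leftmost points, and nesting the small circles so that the splittings occur on prescribed branches, lets us realize any tree whose vertices have degrees $1$ and $3$.

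\emph{Higher degrees.} A vertex of degree $k>3$ requires several leftmost points at exactly the same $x_1$-value on the same contour. This is allowed, since the Reeb space vertex is defined by the contour and not by a single point. Finally, the intersection points of the circles must be transversal and must not be critical points of the projection. The condition $\dim {\bigcap} T_p S_{{\lambda}_j}=0$ in Definition \ref{def:1} holds automatically for two transversally meeting circles, and generic small perturbations of the centers preserve both transversality and the required combinatorics.

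\emph{Degree-$2$ vertices (STEP 1-2).} Choose a very small radius $r_0$, the analogue of the large $a_0$, and place circles of radius $r_0$ near boundary points of $\overline{D_0}$. There are two ways to do this.
\begin{itemize}
\item A tiny circle cutting off a small cap near a boundary point where $S_j$ is transverse to the vertical direction produces two new degree-$2$ vertices on one edge. These are the $x_1$-extremes of the cap, at which the contour does not split.
\item Placing the tiny circle so that one of its vertical tangency points lies on the removed side yields a single new vertex.
\end{itemize}
Smallness guarantees the added circles are mutually disjoint inside $\overline{D_0}$ and do not alter the rest of the graph. This step needs an analogue of the normal-segment condition of STEP 1-1-2, which is easy for circles because the region lies locally on one side of a convex arc.

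\emph{Main obstacle.} The hardest step is producing one new vertex, rather than two, on a prescribed edge using only a circle, which has two vertical tangencies. I would first try centering the tiny circle on an existing crossing point of $S_1\cap S_j$, so that one of its extremal points lies outside $\overline{D_0}$. I would then check carefully that no contour changes its number of components there. The parity of the vertex count on an edge may otherwise force combining one- and two-vertex insertions.
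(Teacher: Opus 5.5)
Your plan, rerunning the proof of Main Theorem~\ref{mthm:1} with circles, is what the paper intends: its proof of this statement is only the remark that the parabola argument carries over. As written, though, it has a genuine gap exactly where you suspect one, namely inserting a \emph{single} vertex into a prescribed edge.

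By Definition~\ref{def:2}, every crossing of a new tiny circle $T$ with the boundary of $\overline{D_0}$ is a singular point, and so is every vertical tangency point of $T$ lying in $\overline{D}$. Each such point makes its contour a vertex. So a cap with both vertical tangencies of $T$ on the removed side, and both crossings on new contours, adds exactly two vertices.

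Neither of your recipes for one vertex works.
\begin{itemize}
\item If one vertical tangency point of $T$ is on the removed side, the other lies in $\overline{D}$, with the region outside $T$. Next to it, $T$ separates a contour into two pieces, one being a thin sliver between $T$ and the old boundary that closes up at a crossing. This creates a new branch.
\item Centering $T$ at a crossing tip fails whenever the tangent cone of the finger at the tip contains the leftward horizontal direction, for example at the left crossing of two overlapping disks lying to the left of both centers. The leftmost point of $T$ is then inside the finger and splits it.
\end{itemize}

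The missing idea is to make one of the two new crossings cost nothing. There are two ways.
\begin{itemize}
\item Put that crossing exactly on an existing critical contour, e.g.\ at an endpoint $(p_{1,j},q_p)$ of the vertical segment through a branching point; this is what the paper's choice of these points is for. Place the other crossing on the incoming edge, and keep both vertical tangencies of $T$ outside $\overline{D_0}$. The cap then lies between the two crossings in the $x_1$-direction, the old critical contour is unchanged, and exactly one degree-$2$ vertex appears.
\item Truncate a tip $(p_{1,0},q)$ with $T$ placed off-center, so that its arc inside $\overline{D_0}$ has no vertical tangent. The right-hand crossing becomes the new leaf, and the other crossing is the single new vertex.
\end{itemize}
Every edge of your STEP 1-1 configuration ends on the right at a tip or at a branching vertex. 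Hence these two moves plus caps give every count on every edge.

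For a path your base is a single disk, where neither move is available. Start instead from a lens of two congruent circles. It carries one or two interior vertices according as its two crossings lie on one vertical line or not, and you can then add caps.

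Two further points need repair.

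First, you use three congruence classes: the large $S_1$, the $l-1$ circles of a common smaller radius, and the circles of radius $r_0$. The statement, as you formulate it yourself, allows only circles congruent to the prescribed one or to one other. The paper keeps $S_1,\dots,S_l$ congruent, and you can too. For instance, apply $(x_1,x_2)\mapsto(\mu x_1,x_2)$ with small $\mu>0$, which does not change Poincar\'e-Reeb graphs, to the paper's configuration of translated parabolas. Then replace each parabola by the circle of radius $1/(2\mu a)$ osculating it at its vertex.

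Second, every crossing in $\overline{D}$ is a vertex, so you must say where the leaves are and check that each crossing in $\overline{D}$ is the tip of a finger. In your configuration every small circle meets $S_1$ twice and the teeth between consecutive bites reach $S_1$. Then the corner of each tooth that is not its rightmost point is a degree-$2$ vertex, which STEP 1-1 must avoid. The paper prevents this by putting all crossings in $\overline{D}$ at the finger tips on the line $x_1=p_{1,0}$.

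Note also that no half-plane condition is available in an RA-region bounded only by circles. $D$ has to be a component cut out by the circles themselves, singled out through the choice of $U_D$.
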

We present related history here. Note that related studies are mainly initiated by the author. 
Remember that studies on regions have been launched by the author himself, based on active studies of the author on (re)construction of real algebraic maps onto regions in real affine spaces, pioneered in the article \cite{kitazawa3}, remarked in \cite{kitazawa9} by the author himself, and the preprint \cite{kitazawa5}. Related construction is also explicitly presented in our proof of Main Theorem \ref{mthm:2}, again.

As a class we do not consider in our present paper, the class of cylinders of real algebraic manifolds in real affine spaces is important. A cylinder of a real algebraic manifold is a real algebraic manifold in ${\mathbb{R}}^{k_1+K_2}$ with $k_1$ and $k_2$ being positive integers congruent to a real algebraic manifold of the form $C \times {\mathbb{R}}^{k_2} \subset {\mathbb{R}}^{k_1} \times {\mathbb{R}}^{k_2}$, where $C$ is a real algebraic manifold in ${\mathbb{R}}^{k_1}$. 
In \cite{kitazawa6, kitazawa8}, motivated by the study \cite{kitazawa5}, regions surrounded by cylinders of circles in the plane are important. In \cite{kitazawa7}, cylinders of connected components of hyperbolas are important. Systematic studies on regions surrounded by circles are launched first in \cite{kitazawa10, kitazawa11} and important in \cite{kitazawa15, kitazawa16, kitazawa18}. \cite{kitazawa17} is also a recent preprint on a discovery of an explicit case related to cylinders of circles. \cite{kitazawa12, kitazawa19} are also preprints related to regions surrounded by real algebraic curves in the plane or RA regions and they are on certain explicit and general cases. 

We can say that cases with parabolas are first presented in \cite[Theorem 1]{kitazawa20}. Main Theorem \ref{mthm:2} is regarded as a slightly revised theorem. In \cite[Theorem 1]{kitazawa20}, the RA-region is surrounded by parabolas and two straight lines $\{(-a,t_1),(a,t_2) \mid t_1,t_2 \in \mathbb{R}\}$ with $a>0$, where the notion of RA-region for infinitely many real algebraic manifolds $S_{\lambda}$ is not defined in \cite{kitazawa20}. The construction of the map onto the closure of the region taken in ${\mathbb{R}}^2$ is slightly different from that in the proof of the original theorem.

\begin{MainThm}
\label{mthm:2}
\begin{enumerate}
\item \label{mthm:2.1}
 There exist an infinite graph $G$ and a parabola $S_P \subset {\mathbb{R}}^2$ and we have an RA-region $(D,\{S_j\}_{j \in \mathbb{Z}})$ such that $D \subset {\mathbb{R}}^2$, that $S_j$ is a parabola congruent to $S_P$, that $q_{{\pi}_{2,1} {\mid}_{{\overline{D}}^{{\mathbb{R}}^2}}}$ is proper, and that its Poincar\'e-Reeb graph ${\rm PR}_{(D,\{S_j\}_{j \in \mathbb{Z}})}$ can be defined as in Definition \ref{def:3} with Definition \ref{def:2} and is isomorphic to $G$.
\item \label{mthm:2.2}
In {\rm (}\ref{mthm:2.1}{\rm )}, we can change $S_P$ to an arbitrary parabola without changing $G$ and have the same statement.
\item \label{mthm:2.3} For each integer $m \geq 3$, we have a smooth map $e_{S_P,m}:{\mathbb{R}}^{m+2} \rightarrow {\mathbb{R}}^2$ with the following properties.
\begin{enumerate}
\item \label{mthm:2.3.1} The map is a real polynomial map on some region of the form $\{t \mid -a<t<a\} \times \mathbb{R} \times {\mathbb{R}}^m$ with a suitable positive number $a>0$ containing ${\overline{D}}^{{\mathbb{R}}^2}$.
\item \label{mthm:2.3.2} The zero set ${e_{S_P,m}}^{-1}(0)$ is an $m$-dimensional smooth submanifold of $\{t \mid -a<t<a\} \times \mathbb{R} \times {\mathbb{R}}^m$ and the restriction of ${\pi}_{m+2,2}$ there is a map onto the closure of the region $D$ in {\rm (}\ref{mthm:2.1}{\rm )} and {\rm (}\ref{mthm:2.2}{\rm )}, taken in ${\mathbb{R}}^2$.
\item \label{mthm:2.3.3} The Reeb space is regarded as the Reeb graph and isomorphic to ${\rm PR}_{(D,\{S_j\}_{j \in \mathbb{Z}})}$.
\end{enumerate} 
\end{enumerate}
\end{MainThm}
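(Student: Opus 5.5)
We build an explicit periodic configuration. Fix $a>0$ and take the parabolas $S_{2k}:=\{x_2=(x_1-k)^2\}$ and $S_{2k+1}:=\{x_2=-(x_1-k-\frac{1}{2})^2+c\}$ for $k\in\mathbb{Z}$, with $c>0$ chosen so that consecutive curves meet transversally. We then restrict to the strip $\{-a<x_1<a\}$ so that only finitely many parabolas actually meet the closure; the family is still indexed by $\mathbb{Z}$, and the Poincar\'e-Reeb graph is genuinely infinite. A cleaner alternative keeps the whole plane. Stack the parabolas vertically instead: $S_j:=\{x_2=x_1^2+j\}$ are pairwise disjoint, and between consecutive ones we insert nothing. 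The region $D$ is then the open set lying above every $S_j$ with $j$ even and below every $S_j$ with $j$ odd, taken in a chosen connected component. The precise design will be fixed so that the following two conditions hold.

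\textbf{Geometric conditions.} First, each point of $\overline{D}$ lies on at most two curves, and any two curves through a point meet transversally, so that Definition \ref{def:1} holds with $U_D$ a thin neighbourhood. Second, $q_{{\pi}_{2,1}\mid_{\overline{D}}}$ is proper; for this it suffices that each vertical line meets $\overline{D}$ in a compact set, which boundedness of $\overline{D}$ in the $x_2$-direction over compact $x_1$-intervals gives. I will choose the concrete design to be $D$ bounded by the upward parabolas $x_2=(x_1-2k)^2$ from below and the downward parabolas $x_2=-(x_1-2k-1)^2+1$ from above, giving a chain of lens-like cells whose Poincar\'e-Reeb graph is a $\mathbb{Z}$-indexed chain, or with branchings if the parameters are shifted. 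Everything is then checked on one period and extended by the translation $x_1\mapsto x_1+2$, which permutes the family $\{S_j\}$. Because of this periodicity, the singular points of Definition \ref{def:2} — the tangency critical points of ${\pi}_{2,1}$ on a single curve and the intersection points of two curves — form a discrete set, finitely many in each period. Hence the vertex set is locally finite, and the graph structure of Definition \ref{def:3} makes sense exactly as in the finite case, by the same local arguments; this yields (\ref{mthm:2.1}).

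\textbf{Arbitrary parabola.} Any parabola is carried to any other by a similarity. More precisely, an affine map $(x_1,x_2)\mapsto(\alpha x_1+\beta,\gamma x_2+\delta x_1+\epsilon)$ that preserves vertical lines and the ordering of $x_1$ sends parabolas with vertical axes to parabolas with vertical axes. Such a map is a homeomorphism commuting with ${\pi}_{2,1}$ up to an increasing affine reparametrisation of the line. Transversality and tangency to vertical lines are preserved, so the Poincar\'e-Reeb graph is unchanged. One subtlety is that the family must consist of parabolas congruent to a given $S_P$; for this I would use only translations and reflections $x_2\mapsto -x_2$ in the construction, and then note that the leading coefficient $a$ can be arbitrary. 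This gives (\ref{mthm:2.2}).

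\textbf{The map.} Write $D\cap\{-a<x_1<a\}$ as $\bigcap_j\{f_j>0\}$, where only finitely many $f_j$ are relevant on the strip, and put $f:=\prod f_j$ over those $j$. I would define $e_{S_P,m}(x,y):=\bigl(\ ?\ \bigr)$ following the established construction of \cite{kitazawa3, kitazawa5}: take $M:=\{(x,y)\in\mathbb{R}^2\times\mathbb{R}^m \mid$ a suitable polynomial equation in the $f_j$ and $y$ holds$\}$, for instance $f_j(x)=y_j^2$ for a splitting of the coordinates of $y$, so that $\pi_{m+2,2}\mid_M$ is onto $\overline{D}$ on the strip. Outside the strip it is glued smoothly, which is why only smoothness is asserted globally. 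Non-singularity follows from the transversality of the $S_j$. The fibres over points of $D$ are products of spheres, and the condition $m\geq3$ makes them connected, so the Reeb space of ${\pi}_{2,1}\circ{\pi}_{m+2,2}\mid_M$ coincides with the Poincar\'e-Reeb graph.

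\textbf{Main obstacle.} I expect the hard part to be checking that the composed function has a finite-type critical structure on each compact piece, so that its Reeb space is a graph in the sense of the Preliminaries. I would reduce this to the local models at tangency and intersection points.
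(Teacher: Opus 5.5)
There are genuine gaps, starting with the region itself; only your treatment of (\ref{mthm:2.2}), by a rescaling that preserves vertical lines, is fine and matches the paper's $(x_1,x_2)\mapsto(rx_1,x_2)$. None of your configurations is an RA-region. If only finitely many $S_j$ meet $\overline{D}$, Definition~\ref{def:1} fails, since every $S_\lambda$ must meet $\overline{D}$, and the graph would be finite. The set above every $\{x_2=x_1^2+2i\}$ is empty. Your final design is the region between the lower envelope of $U_k:=\{x_2=(x_1-2k)^2\}$ and the upper envelope of $L_k:=\{x_2=-(x_1-2k-1)^2+1\}$. It fails in three ways. First, near $x_1=2k+\frac12$ it lies inside $U_k$ and near $x_1=2k+\frac{21}{2}$ outside it, so $D\neq U_D\cap\bigcap_\lambda\{f_\lambda>0\}$ for any choice of signs. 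Second, at every integer $x_1$ the two envelopes coincide, so $D$ is a disjoint union of open lenses and is not connected. Third, each pinch point, e.g.\ $(2k+1,1)\in U_k\cap U_{k+1}\cap L_k$, lies on three curves, violating condition (\ref{def:2.2}) of Definition~\ref{def:1}. The missing idea is that $D$ must lie on the non-convex side of \emph{every} parabola. Moreover, with vertical axes every curve is a graph over $x_1$, so each slice $D\cap\{x_1=b\}$ is a single interval. No shift of parameters produces branching, and there are no tangency vertices at all.

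More seriously, periodicity under $x_1\mapsto x_1+2$ is incompatible with part (\ref{mthm:2.3}). Parts (\ref{mthm:2.3.1}) and (\ref{mthm:2.3.2}) require $\overline{D}\subset\{-a<x_1<a\}\times\mathbb{R}$, with the zero set projecting onto all of $\overline{D}$. Your $\overline{D}$ projects onto all of $\mathbb{R}$, so ``restrict to the strip and glue outside'' discards exactly what must be covered. The paper instead uses horizontal axes and periodicity in $x_2$. Reading its signs so that, as the proof asserts, the two families are disjoint and the vertices lie over $x_1=\pm\frac12,\pm\frac72$, it takes right-opening parabolas $x_1=(x_2-c)^2-\frac12$ with $c\in4\mathbb{Z}$ and left-opening ones $x_1=-(x_2-c)^2+\frac12$ with $c\in4\mathbb{Z}+2$. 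Then $D$ is the intersection of their outsides: a zigzag band inside $[-\frac72,\frac72]\times\mathbb{R}$. Opposite-opening parabolas are disjoint, same-opening neighbours cross transversally over $x_1=\pm\frac72$, and the parabola vertices give tangency vertices over $x_1=\pm\frac12$. The graph is infinite because each line $x_1=b$ meets $\overline{D}$ in infinitely many compact components, not because $\pi_{2,1}(\overline{D})$ is unbounded. For (\ref{mthm:2.3}) your map is never written down, and the template $f_j=y_j^2$ gives one equation per relevant $j$, not a map to $\mathbb{R}^2$. The paper glues each of its four families into one smooth graph $x_1=c_k(x_2)$. It then takes the two components $(c_{k_1}-x_1)(x_1-c_{k_2})-y_1^2$ and $(c_{k_3}-x_1)(x_1-c_{k_4})-\sum_{j\geq2}y_j^2$, each pairing a right- with a left-opening family, and checks rank $2$ at interior, edge and corner points. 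Finally, ``$m\geq3$ makes the fibres connected'' is false for $m=3$, where one factor is $S^0$. What must be checked is that the preimage of each contour segment is connected, and that depends on which curves its endpoints lie on.
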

\begin{proof}

We first prove that the properties (\ref{mthm:2.1}, \ref{mthm:2.2}) are enjoyed. 
\begin{itemize}
\item Let $S_{4j}:=\{\{(x_1,x_2)\mid  x_1-{(x_2-8j)}^2+\frac{1}{2}=0\}$, where $j$ is an integer. 
\item Let $S_{4j+1}:=\{\{(x_1,x_2)\mid  x_1-{(x_2-8j-4)}^2+\frac{1}{2}=0\}$, where $j$ is an integer. 
\item Let $S_{4j+2}:=\{\{(x_1,x_2)\mid  -x_1+{(x_2-8j-2)}^2-\frac{1}{2}=0\}$, where $j$ is an integer. 
\item Let $S_{4j+3}:=\{\{(x_1,x_2)\mid  -x_1+{(x_2-8j-6)}^2-\frac{1}{2}=0\}$, where $j$ is an integer. 
\end{itemize}
They are congruent to a parabola.
For each $S_i$, by changing "$=$" in the equation to "$<$", "$>$","$\leq$" and "$\geq$" , we have $S_{i,<}$, $S_{i,>}$, $S_{i,\leq}$, and $S_{i,\geq}$, respectively.
The set $S_{4j_1+k_1} \bigcap S_{4j_2+k_2}$ is empty for any pair $(j_1,j_2)$ of integers in the case $k_1 \in \{0,1\}$ and $k_2 \in \{2,3\}$.
Let $D:={\bigcap}_{j \in \mathbb{Z}} (S_{4j,<} \bigcap S_{4j+1,<} \bigcap S_{4j+2,>} \bigcap  S_{4j+3,>})$, which is a region surrounded by these curves $S_i$.
The closure $${\overline{D}}^{{\mathbb{R}}^2}:={\bigcap}_{j \in \mathbb{Z}} (S_{4j,\leq} \bigcap S_{4j+1,\leq} \bigcap S_{4j+2,\geq} \bigcap  S_{4j+3,\geq})$$ is also important.
The set $S_i \bigcap \overline{D}$ is non-empty for any integer $i$.

The map $q_{{\pi}_{n,1} {\mid}_{{\overline{D}}^{{\mathbb{R}}^n}}}$ is proper, from our construction. We can also see that any connected component $I_{b,q}$ of the intersection of a line of the form $\{(b,t) \mid t \in \mathbb{R}\}$ and ${\overline{D}}^{{\mathbb{R}}^n}$ is homeomorphic to the one-point set or $D^1$.
The Poincar\'e-Reeb graph ${\rm PR}_{(D,\{S_j\}_{j \in \mathbb{Z}})}$ can be defined in the following way. This is same as the case where the set $\Lambda$ is finite.
\begin{itemize}
\item Its vertex is defined as the uniquely defined value of  $q_{{\pi}_{n,1} {\mid}_{{\overline{D}}^{{\mathbb{R}}^n}}}$ on a connected component $I_{b,s}$ of the intersection of a line of the form $\{(b,t) \mid t \in \mathbb{R}\}$ and ${\overline{D}}^{{\mathbb{R}}^n}$ with either of the following.
\begin{itemize}
\item $b=\pm \frac{7}{2}=\pm (2^2-\frac{1}{2})$.
\item $b=\pm \frac{1}{2}$ and $I_{b,q}$ and some $S_i$ intersect in some point in such a way that there their tangent vector spaces there agree.
\end{itemize}
\item Its edge is defined as the image of a connected component of the complementary set of the union of all $I_{b,s}$ above in ${\overline{D}}^{{\mathbb{R}}^n}$ by $q_{{\pi}_{n,1} {\mid}_{{\overline{D}}^{{\mathbb{R}}^n}}}$.
\end{itemize}

By using an affine transformation mapping $(x_1,x_2)$ to $(rx_1,x_2)$ with $r>0$, we have a similar case of parabolas congruent to any given parabola.

The properties (\ref{mthm:2.1}, \ref{mthm:2.2}) are shown to be enjoyed. 

We prove that the property (\ref{mthm:2.3}) is enjoyed for the region $D$. The closure ${\overline{D}}^{{\mathbb{R}}^2}={\overline{D}}^{{\mathbb{R}}^2}$ is also important.

We define some smooth functions. Similar functions are defined in the original proof of \cite[Theorem 2]{kitazawa20} and there exists some difference between these two situations.
\begin{itemize}
\item Let $k=0,1,2,3$. Let $c_{\mathcal{S},k}$ be a smooth function such that on ${\bigcup}_{j \in \mathbb{Z}} \{x_2 \mid 8j+2k-\frac{5}{2}<x_2<8j+2k+\frac{5}{2}\}$, real polynomial function of degree $2$ for the parabolas $S_{4j+k}$ above and that outside the set ${\bigcup}_{j \in \mathbb{Z}} \{x_2 \mid 8j+2k-\frac{5}{2}<x_2<8j+2k+\frac{5}{2}\}$ in $\mathbb{R}$, the absolute values of the values of $c_{\mathcal{S},k}$ are always greater that ${(\frac{5}{2})}^2-\frac{1}{2}=\frac{23}{4}$. 
\item Let $k=0,1,2,3$. The following are equivalent.
\begin{itemize}
\item $-\frac{7}{2} \leq c_{\mathcal{S},k}(x_2) \leq \frac{7}{2}$.

\item $8j+2k-2 \leq x_2 \leq 8j+2k+2$, $j \in \mathbb{Z}$.
\end{itemize}
\end{itemize}
We can define a new smooth map $e_{S_P,m}:{\mathbb{R}}^{m+2} \rightarrow {\mathbb{R}}^2$ defined by $e_{S_P,m}((x_1,x_2,{(y_j)}_{j=1}^{m}):=(c_{\mathcal{S},1}(x_2)-x_1) (x_1-c_{\mathcal{S},3}(x_2))-{y_1}^2,c_{\mathcal{S},2}(x_2)-x_1) (x_1-c_{\mathcal{S},4}(x_2))-{\Sigma}_{j=1}^{m-1} {y_{j+1}}^2)$. The zero set ${e_{S_P,m}}^{-1}(0)=\{(x_1,x_2,{(y_j)}_{j=1}^{m}) \in {\mathbb{R}}^{m+2} \mid (c_{\mathcal{S},1}(x_2)-x_1) (x_1-c_{\mathcal{S},3}(x_2))-{y_1}^2=0, (c_{\mathcal{S},2}(x_2)-x_1) (x_1-c_{\mathcal{S},4}(x_2))-{\Sigma}_{j=1}^{m-1} {y_{j+1}}^2=0\}$ of the map $e_{S_P,m}$ is also represented as ${e_{S_P,m}}^{-1}(0)=\{(x_1,x_2,{(y_j)}_{j=1}^{m}) \in {\overline{D}}^{{\mathbb{R}}^2} \times {\mathbb{R}}^{m} \mid e_{S_P,m}(x_1,x_2,{(y_j)}_{j=1}^{m})=0\}$.

We prove that the properties (\ref{mthm:2.3.1}, \ref{mthm:2.3.2}) are enjoyed with $\frac{7}{2}<a \leq \frac{23}{4}$. The condition $\frac{7}{2}<a \leq \frac{21}{4}$ makes the map $e_{S_P,m}$ a real polynomial map on the region $\{t \mid -a<t<a\} \times \mathbb{R} \times {\mathbb{R}}^m$, containing ${\overline{D}}^{{\mathbb{R}}^2}$ in its interior considered in ${\mathbb{R}}^2$.
We prove that ${e_{S_P,m}}^{-1}(0)=\{(x_1,x_2,{(y_j)}_{j=1}^{m}) \in {\overline{D}}^{{\mathbb{R}}^2} \times {\mathbb{R}}^{m} \mid e_{S_P,m}(x_1,x_2,{(y_j)}_{j=1}^{m})=0\}$
is an $m$-dimensional smooth submanifold of $\{t \mid -a<t<a\} \times \mathbb{R} \times {\mathbb{R}}^m$ by implicit function theorem, as in \cite{kitazawa20}. Related methods are originally presented first in \cite{kitazawa5}. 
We do not assume related knowledge or arguments.
We can easily see that the restriction of ${\pi}_{m+2,2}$ there is a map onto the ${\overline{D}}^{{\mathbb{R}}^2}$. \\

Hereafter, let the curve $\{(c_{\mathcal{S},k}(x_2),x_2) \mid x_2 \in \mathbb{R}\}$ in ${\mathbb{R}}^2$ be denoted by $S_{c_{\mathcal{S},k}}$. For $k_1 \in \{1,2\}$ and $k_2 \in \{3,4\}$, the intersection of  $S_{c_{\mathcal{S},k_1}}$ and $S_{c_{\mathcal{S},k_2}}$ is empty.\\
\ \\
Case 2-1 At the point $(x_1,x_2,{(y_j)}_{j=1}^{m}) \in {e_{S_P,m}}^{-1}(0)$ with $(x_1,x_2) \in D$. \\
The value of the partial derivative of the real polynomial function $(c_{\mathcal{S},1}(x_2)-x_1) (x_1-c_{\mathcal{S},3}(x_2))-{y_1}^2$ by $y_j$ with $j \geq 2$ ($y_1$) is (resp. not) $0$, there. 
The value of the partial derivative of the real polynomial function $(c_{\mathcal{S},2}(x_2)-x_1) (x_1-c_{\mathcal{S},4}(x_2))-{\Sigma}_{j=1}^{m-1} {y_{j+1}}^2$ by $y_1$ (resp. $y_j$ with some $j \geq 2$) is (resp. not) $0$, there.
The rank of the map $e_{S_P,m}$ there is $2$. \\
\ \\
Case 2-2 At the point $(x_1,x_2,{(y_j)}_{j=1}^{m}) \in {e_{S_P,m}}^{-1}(0)$ with $(x_1,x_2) \in {\overline{D}}^{{\mathbb{R}}^2}-D$ and the value of the exactly one of two real polynomial functions there is $0$. \\
The value of the partial derivative of the real polynomial function $(c_{\mathcal{S},1}(x_2)-x_1) (x_1-c_{\mathcal{S},3}(x_2))-{y_1}^2$ by each $y_j$ is $0$, there, in the case the value of the real polynomial function $(c_{\mathcal{S},1}(x_2)-x_1) (x_1-c_{\mathcal{S},3}(x_2))-{y_1}^2$ is $0$ there. In this case, at the point, the value of the partial derivative of the real polynomial function $(c_{\mathcal{S},1}(x_2)-x_1) (x_1-c_{\mathcal{S},3}(x_2))-{y_1}^2$ by some $x_j$ is not $0$ by the smoothness of the curve $S_{c_{\mathcal{S},k}}$ with $k=1,3$. At the point, the value of the partial derivative of the real polynomial function $(c_{\mathcal{S},2}(x_2)-x_1) (x_1-c_{\mathcal{S},4}(x_2))-{\Sigma}_{j=1}^{m-1} {y_{j+1}}^2$ by $y_j$ with some $j \geq 2$ is not $0$. The rank of the map $e_{S_P,m}$ at the point is $2$.

By the symmetry, we can prove this in the case the value of the real polynomial function $(c_{\mathcal{S},2}(x_2)-x_1) (x_1-c_{\mathcal{S},4}(x_2))-{\Sigma}_{j=1}^{m-1} {y_{j+1}}^2$ at the point is $0$.\\
.\ \\
Case 2-3 At the point $(x_1,x_2,{(y_j)}_{j=1}^{m}) \in {e_{S_P,m}}^{-1}(0)$ with $(x_1,x_2) \in {\overline{D}}^{{\mathbb{R}}^2}-D$ and the values of these two real polynomial functions there are both $0$. \\
The values of the partial derivatives of the two real polynomial functions by each $y_j$ are $0$, there. The value of the partial derivative of each of these real polynomial functions by some $x_j$ is not $0$ by the smoothness of the curves $S_{c_{\mathcal{S},k}}$. Due to the location of the parabolas, at the point, the curves $S_{c_{\mathcal{S},k_1}}$ and $S_{c_{\mathcal{S},k_2}}$ intersect in such a way that the two normal vectors chosen for these two curves are mutually independent. The rank of the map $e_{S_P,m}$ at the point is $2$. \\
\ \\
By implicit function theorem, the properties (\ref{mthm:2.3.1}, \ref{mthm:2.3.2}) are shown to be enjoyed. \\
We discuss the property (\ref{mthm:2.3.3}). By the theory \cite{gelbukh, saeki1, saeki2} with the structure of the manifolds and maps, it is immediately shown that the property (\ref{mthm:2.3.3}) is enjoyed. \\

This completes the proof.
\end{proof}
In Main Theorem \ref{mthm:1}, as presented in Main Theorem \ref{mthm:2} (\ref{mthm:2.3}), we can construct smooth maps and functions in the real algebraic situation. This is a kind of exercises. For this, consult also the preprint \cite{kitazawa5, kitazawa20}, arguments related to which we do not assume, for example.
 \section{Conflict of interest and Data availability.}
  \noindent {\bf Conflict of interest.} \\
 The author is a researcher at Osaka Central Advanced Mathematical Institute (OCAMI researcher). The institute is supported by MEXT Promotion of Distinctive Joint Research Center Program JPMXP0723833165. He is not employed there. He thanks this. \\
  %Some of works by other researchers and this version may overlap in some of the contents due to the nature that our problems are natural in theory of Morse functions and applications to differential topology and that related mathematical studies are very fundamental and classical in some senses, for example. However the present version of our paper is presented independent of these work. \\
  %Saga Souhatsu Mathematical Seminar (http://inasa.ms.saga-u.ac.jp/Japanese/saga-souhatsu.html), inviting the author as a speaker, is funded and supported by JST Fusion Oriented REsearch for disruptive Science and Technology JPMJFR202U: the author was a speaker on 2024/7/12 supported by this project.\\
  \ \\
  {\bf Data availability.} \\
 No data other than the present file is generated, essentially. Non-trivial arguments from preprints (of the author) are not assumed in arguments in the present paper.

\end{document}